\documentclass[12pt,leqno,oneside]{amsart}
\usepackage{amssymb,amsmath,amscd,graphicx,fontenc,amsthm,mathrsfs, mathtools}
\usepackage[frame, cmtip, arrow, matrix, line, graph, curve]{xy}
\usepackage{graphpap, color}
\usepackage[mathscr]{eucal}
\usepackage{ifthen}
\usepackage{relsize}
\newcommand{\vertiii}[1]{{\left\vert\kern-0.25ex\left\vert\kern-0.25ex\left\vert #1
    \right\vert\kern-0.25ex\right\vert\kern-0.25ex\right\vert}}
\theoremstyle{plain}
\usepackage{geometry}\geometry{margin=1in}
\usepackage[colorlinks = true,
            linkcolor = blue,
            urlcolor  = blue,
            citecolor = blue,
            anchorcolor = blue]{hyperref}

\usepackage{fancyhdr}
\usepackage{indentfirst}
\usepackage{paralist}
\usepackage{pstricks}
\usepackage{bbm}
\usepackage{tikz}

\newtheorem{theorem}{Theorem}[section]

\newtheorem{lemma}[theorem]{Lemma}
\newtheorem{proposition}[theorem]{Proposition}

\theoremstyle{remark}

\numberwithin{equation}{section}

\newcommand{\NN}{{\mathbb{N}}}
\newcommand{\ZZ}{{\mathbb{Z}}}

\newcommand{\RR}{{\mathbb{R}}}


\DeclareMathOperator{\modu}{mod}

\makeatletter
\def\moverlay{\mathpalette\mov@rlay}
\def\mov@rlay#1#2{\leavevmode\vtop{%
   \baselineskip\z@skip \lineskiplimit-\maxdimen
   \ialign{\hfil$\m@th#1##$\hfil\cr#2\crcr}}}
\newcommand{\charfusion}[3][\mathord]{
    #1{\ifx#1\mathop\vphantom{#2}\fi
        \mathpalette\mov@rlay{#2\cr#3}
      }
    \ifx#1\mathop\expandafter\displaylimits\fi}
\makeatother

\definecolor{cmd}{rgb}{1.0, 0.35, 0.21}


\begin{document}
	\title{Small solutions of quadratic forms with congruence conditions}
	\author{Prasuna Bandi}
	\author{Anish Ghosh}
	\thanks{AG was supported by the Government of India, Department of Science and Technology, Swarnajayanti fellowship DSTSJFMSA-01/2016-17, a CEFIPRA grant, a MATRICS grant, and a grant from the Infosys Foundation. PB and AG acknowledge support of the Department of Atomic Energy, Government of India [under project 12 - R\&D - TFR - 5.01 - 0500].\\ Mathematics Subject Classification: 11E20, 11H55.}
	\address{PB and AG: School of Mathematics, Tata Institute of Fundamental Research, Mumbai, 400005, India}
	\email{prasuna@math.tifr.res.in, ghosh@math.tifr.res.in}
	\begin{abstract}
We consider a system of homogeneous quadratic forms with congruence conditions in $n\geq 3$ variables and prove the  existence of two linearly independent integral solutions of bounded height. We also show the existence of small height integral zeros of this system avoiding a given set of hyperplanes.
\end{abstract}
	\maketitle

	\section{Introduction}
	
	Small integral solutions of quadratic forms have been studied extensively beginning with work of Thue \cite{thue}.  A well-known result of J. W. S. Cassels \cite{cas} states that, given an isotropic integral quadratic form $Q$ in $n\ge 2$ variables, there exists $x\in \ZZ^{n}\setminus \{0\}$ such that $$Q(x)=0 \text{ and } \|x\|\ll H^{\frac{n-1}{2}}.$$ 
	Here and henceforth, $H$ denotes the maximum of the absolute values of the coefficients of $Q$ and $\|\cdot\|$ denotes the Euclidean norm on $\RR^{n}$. The implied constant above depends only on $n$. Subsequently, there have been many extensions and generalizations of Cassels's result along different directions, see  for instance \cite{bd, sp, ss, Vaaler}, and \cite{Len 1} for a recent survey. \\
	In \cite{Dav}, Davenport proved that given an isotropic integral quadratic form in $n\ge 2$ variables, there exists two linearly independent solutions $a, b \in \ZZ^{n}$ to $$Q(x)=0$$ such that 
	$$ \|a\| \|b\| \ll H^{n-1}.$$
	In (\cite{kor 1},\cite{kor 2}), Kornhauser proved a variation of Cassels's result for an inhomogeneous quadratic form in $n=2$ and $n\ge 5$  variables. Dietmann \cite{Diet} improved these results and extended them to $3$ and $4$ variables. He used the circle method for $n\ge 5$ variables and geometry of numbers techniques for $3$ and $4$ variables. The proof involves an argument that finding integral solutions of bounded height to inhomogeneous quadratic forms is equivalent to finding integral solutions of bounded height to quadratic form without linear part along with certain congruence conditions.
	Also, there have been several results about the distribution of small height zeros of quadratic forms. In particular Fukshansky \cite{Len} proved the existence of small height zeros of a quadratic form avoiding a given set of hyperplanes. In \cite{Diet 1}, Dietmann improved these bounds.\\ \\
	In this paper, we consider a homogeneous quadratic form with a congruence condition in $n\geq 3$ variables and prove the  existence of two linearly independent integral solutions of bounded height and also small height zeros of this system avoiding a given set of hyperplanes. \\ \\
	

	\section{Main results}
	
	The following Theorem is Proposition 1 of \cite{Diet} when $\kappa=0$.\\
	
	Let $Q$ be a nondegenerate quadratic form in $n\geq 3$ variables with integral coefficients and let $\Delta= \det Q$. Let $\xi \in \ZZ^{n}$ and $\eta \in \NN$. For a prime $p$, define $\lambda(p)$ and $\nu(p)$ to be largest positive integers such that $p^{\lambda(p)}\mid \Delta$ and $p^{\nu(p)}\mid \eta$ respectively. Let $\pi(p)=1$ when $p=2$ and $\pi(p)=0$ otherwise. Define
	$$\Theta:= \prod_{p:\nu(p)>\lambda(p)+\pi(p)}^{} p^{(s-1)\nu(p)-2(s-1)\lambda(p)}$$
	
	\begin{theorem}(\cite{Diet}, Proposition 1) \label{thm d}
		let $\varepsilon>0$. Assume there exists an integral solution to the system
		\begin{equation}\label{eq36}
		\begin{split}
		& Q(x)=0\\
		& x \equiv \xi (\modu \eta)\\
		\end{split}
		\end{equation}
		then there exists $x\in \ZZ^{n}$ satisfying (\ref{eq36}) with
		\begin{equation}\label{eq37}
		\|x\| \ll \left\{ \begin{array}{l}
		\max\{\eta^{3}|\Delta|^{2}H^{2}, \eta^{3}H^{3} \}  \hspace{2.7cm}\;\;\text{ if  }\; n=3\\
		\max \{\eta^{9}|\Delta|^{4}H^{5}, \eta^{20}H^{8} \} \; \hspace{2.5cm}\;\;\text{ if }\; n=4\\
		\eta^{1+\varepsilon +\frac{n}{n-4}} \Theta^{\varepsilon+\frac{2}{n-4}} H^{\varepsilon +\frac{n^{2}-3n+2}{n-4}} |\Delta|^{\varepsilon +\frac{3n+1}{n-4}} \;\;\;\;\text{ if }\; n\geq 5
		\end{array}
		\right.
		\end{equation}
		\end{theorem}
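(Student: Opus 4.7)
My plan is to follow the same bifurcation that Dietmann uses, treating the low-dimensional cases $n=3,4$ by geometry of numbers and the higher-dimensional case $n\geq 5$ by a quantitative circle method. The first move in either case is the affine reduction $x=\xi+\eta y$ with $y\in\ZZ^{n}$. Setting $B(\cdot,\cdot)$ for the symmetric bilinear form associated to $Q$, the equation $Q(x)=0$ becomes
$$Q(\xi)+\eta B(\xi,y)+\eta^{2}Q(y)=0.$$
The hypothesis that \eqref{eq36} is solvable forces $\eta^{2}\mid Q(\xi)+\eta B(\xi,y_{0})$ for some integer $y_{0}$, so dividing by $\eta^{2}$ converts the problem into locating a small integer point on an inhomogeneous quadric whose quadratic part is again $Q$ and whose linear and constant parts are controlled in terms of $\xi,\eta$. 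This makes the congruence structure transparent and shifts the difficulty onto an inhomogeneous analogue of Cassels's theorem.

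For $n\geq 5$ I would apply the Hardy-Littlewood circle method to the inhomogeneous form obtained from the reduction. The major arcs produce the singular integral (a positive real density bounded below in terms of $H$ and the box size) times the singular series $\mathfrak{S}=\prod_{p}\sigma_{p}$, whose local factors $\sigma_{p}$ must be shown to be nonzero and to enjoy a uniform lower bound $\sigma_{p}\geq 1-Cp^{-(n-2)/2}$ away from the bad primes $p\mid 2\eta\Delta$. At the bad primes one uses Hensel lifting from the assumed global solution, with the loss being precisely measured by the quantity $\Theta$ in the statement, which tracks how much further than $\lambda(p)+\pi(p)$ the $p$-adic valuation of $\eta$ goes. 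The minor arc contribution is controlled via Weyl's inequality with the usual exponent $2^{1-n}$, and requiring that the main term dominates forces the box size to the value stated in \eqref{eq37}; the $\varepsilon$ is absorbed into the standard divisor bound for the singular series at primes dividing $\Delta$.

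For $n=3,4$ the circle method is not available, and I would instead work with the lattice $\Lambda=\xi+\eta\ZZ^{n}$ of admissible vectors and Cassels's geometry-of-numbers argument applied to $Q$ restricted to $\Lambda$ after clearing the affine shift against a fixed integral solution. The covolume of $\eta\ZZ^{n}$ contributes an $\eta^{n}$ factor into the successive minima, while the determinant $\Delta$ enters through the Gram matrix change of basis that diagonalises $Q$ over a sublattice. Handling the isotropic direction requires one of Cassels's two variant arguments: the ``diagonal'' argument gives the first bound $\eta^{3}|\Delta|^{2}H^{2}$ for $n=3$ and $\eta^{9}|\Delta|^{4}H^{5}$ for $n=4$, while the degenerate scenario where the found vector falls on the trivial ray forces a perturbation step that gives the alternative bounds $\eta^{3}H^{3}$ and $\eta^{20}H^{8}$; one then takes the maximum.

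The main obstacle, and the reason the $n=4$ exponents are so much worse than the others, will be the $n=4$ geometry-of-numbers step: four-variable quadratic forms are allowed to be anisotropic over some $\QQ_{p}$, so Minkowski's second theorem does not immediately produce four linearly independent short vectors on which $Q$ vanishes, and one is forced into a two-stage reduction whose determinant loss is the source of the $\eta^{20}$. A secondary obstacle is the bookkeeping of $\Theta$ in the $n\geq 5$ circle method: primes with $\nu(p)$ much larger than $\lambda(p)$ require one to lift modulo successively higher powers of $p$, and one must verify that the Hensel lifting constant matches exactly the exponent $(s-1)\nu(p)-2(s-1)\lambda(p)$ appearing in the definition of $\Theta$, rather than something slightly larger that would spoil the final bound.
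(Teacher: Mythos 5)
The first thing to note is that the paper does not prove this statement at all: it is quoted verbatim as Proposition 1 of \cite{Diet} (the case $\kappa=0$), so there is no in-paper argument to compare yours against. Judged on its own terms, your proposal is a strategy outline rather than a proof: none of the exponents in (\ref{eq37}) are actually derived, and the two genuinely hard points --- the singular-series bookkeeping that produces $\Theta$ for $n\geq 5$, and the $n=4$ reduction responsible for $\eta^{20}H^{8}$ --- are flagged as ``obstacles'' to be overcome rather than resolved. Identifying the correct toolbox (circle method for $n\geq5$, geometry of numbers for $n=3,4$) matches the description of Dietmann's proof given in the introduction, but that is where the overlap with a complete argument ends.

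More concretely, your opening reduction is not valid as stated, and for $n=3,4$ it points in a circular direction. Writing $x=\xi+\eta y$ gives $Q(\xi)+\eta B(\xi,y)+\eta^{2}Q(y)=0$, but you cannot ``divide by $\eta^{2}$'' to obtain an integral inhomogeneous quadric in $y$: the solvability hypothesis only gives $\eta^{2}\mid Q(\xi)+\eta B(\xi,y_{0})$ for $y_{0}$ in a particular residue class modulo $\eta$, so after division the equation either has non-integral coefficients or still carries congruence conditions on $y$ --- you have not eliminated the feature you set out to eliminate. Moreover, Dietmann's reduction runs in the opposite direction (from the inhomogeneous problem to the homogeneous problem with congruence conditions), and his inhomogeneous Cassels-type bounds for $n=3$ and $n=4$ are themselves deduced from the congruence statement you are trying to prove; invoking ``an inhomogeneous analogue of Cassels's theorem'' in those dimensions would therefore be circular unless you supply an independent proof of it, which for $n=3,4$ is precisely the hard geometry-of-numbers work (constructing new solutions from a known solution of the system via the parametrization of the quadric and controlling denominators prime by prime). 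As it stands, the proposal names the right ingredients but does not constitute a proof of the stated bounds.
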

	The following Theorem proves the existence of two small linearly independent integral solutions of system (\ref{eq36}).
	\begin{theorem}\label{thm n}
		Let $Q$ be a nondegenerate quadratic form in $n\geq 3$ variables with integral coefficients. Let $\xi\in \ZZ^{n}$, $\eta \in \NN$ and $\varepsilon>0$. Assume that there is a non-zero integral solution to the system
		\begin{equation}\label{eq20}
		\begin{split}
		& Q(x)=0\\
		& x \equiv \xi (\modu \eta)
		\end{split}
		\end{equation}
		Then there exists two linearly independent integral vectors $a$ and $b$ satisfying (\ref{eq20}) such that 
		\begin{equation}
		\|a\|^{5} \|b\| \ll \left\{ \begin{array}{l}
		\max\{\eta^{38}|\Delta|^{20}H^{3n+18}, \eta^{38}H^{3n+28} \}  \hspace{2.6cm}\text{ if  }\; n=3\\
		\max \{\eta^{98}|\Delta|^{40}H^{3n+48}, \eta^{208}H^{3n+78} \}  \hspace{2.5cm}\text{ if }\; n=4\\
		\eta^{18+\varepsilon +\frac{10n}{n-4}} \Theta^{\varepsilon+\frac{20}{n-4}} H^{\varepsilon +\frac{13n^{2}-44n+28}{n-4}} |\Delta|^{\varepsilon +\frac{10(3n+1)}{n-4}} \hspace{1cm}\text{ if } \; n\geq 5
		\end{array}
		\right.
		\end{equation}
		where the implied constants depends only on $n$.
	\end{theorem}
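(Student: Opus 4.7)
The plan is to apply Theorem~\ref{thm d} twice. The first application yields a small solution $a$ of (\ref{eq20}), whose norm is bounded by the right-hand side of (\ref{eq37}). To produce a second solution $b$ linearly independent from $a$ while keeping the congruence condition, I look for $b$ of the form $b=a+\eta c$ with $c\in\ZZ^n$. This forces $b\equiv\xi\pmod\eta$ automatically, and the isotropy condition $Q(b)=0$ becomes
\[
\eta Q(c)+B(a,c)=0,
\]
where $B$ denotes the symmetric bilinear form associated to~$Q$. Restricting to the codimension-one integer sublattice $L:=\{c\in\ZZ^n:B(a,c)=0\}$ removes the linear term and leaves the purely quadratic condition $Q(c)=0$ on~$L$. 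Since $B(a,a)=2Q(a)=0$, the vector $a$ itself lies in~$L$, so $Q|_L$ is isotropic with $a$ in its radical.

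Next I pick a Minkowski-reduced integral basis $v_1,v_2,\ldots,v_{n-1}$ of~$L$ with $v_1$ proportional to $a$. Since $L$ is the kernel in $\ZZ^n$ of the integer linear form $c\mapsto B(a,c)$, whose coefficients have size $\ll H\|a\|$, the covolume of~$L$, and hence $\max_i\|v_i\|$, is $\ll H\|a\|$ by Minkowski's second theorem. The form $Q'(y)=Q\bigl(\sum_iy_iv_i\bigr)$ in $n-1$ variables then has height $\ll H\cdot(\max_i\|v_i\|)^2\ll H^3\|a\|^2$, with analogous polynomial control on its discriminant and on the auxiliary quantity $\Theta$. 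Because $a$ lies in the radical of $Q|_L$, the variable $y_1$ does not appear in $Q'$, and we are left with a nondegenerate form $\widetilde Q$ in $n-2$ variables. A second application of Theorem~\ref{thm d} (with trivial congruence, $\eta'=1$, $\xi'=0$) to $\widetilde Q$ produces a small $y$ and hence a small $c\in L$ with $y_1=0$, ensuring $c\not\parallel a$; then $b=a+\eta c$ is the required second solution.

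The argument as stated requires $n-2\geq 3$, i.e.\ $n\geq 5$, which explains why the $n\geq 5$ bound in the theorem is smooth in~$n$ while the $n=3,4$ bounds are stated separately. For $n=3,4$ the form $\widetilde Q$ has too few variables for Theorem~\ref{thm d}, and the reduction above must be replaced by a more direct construction based on the classical chord formula
\[
b_c:=B(a,c)\,c-Q(c)\,a,
\]
which for any $c\in\ZZ^n$ with $B(a,c)\neq 0$ and $c\not\parallel a$ yields an integer zero of $Q$ linearly independent from $a$. One then applies Theorem~\ref{thm d} in $n$ variables to a suitably modified system (with a larger modulus built from $\eta$ and the coefficients of $B(a,\cdot)$ and $Q$) to arrange that $b_c\equiv\xi\pmod\eta$; this explains the $\max\{\cdot,\cdot\}$ shape and the inflated constants in the low-dimensional branches of the bound. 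Combining the bounds from the two applications and using $\|b\|\leq\|a\|+\eta\|c\|$ yields the claimed estimate on $\|a\|^5\|b\|$, where the fifth power of $\|a\|$ records exactly how $\|a\|$ enters the height and discriminant of the auxiliary form~$Q'$.

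The main obstacle is the careful propagation of the parameters $H,|\Delta|,\Theta,\eta$ from~$Q$ to $\widetilde Q$, and then through the second application of Theorem~\ref{thm d}; keeping the $\varepsilon$-dependence exactly as in (\ref{eq37}) requires a bit of bookkeeping. The low-dimensional cases $n=3,4$ are the other delicate point, since the radical of $Q|_L$ swallows too many variables and one is forced into the chord construction together with an \emph{ad hoc} use of Theorem~\ref{thm d}; a secondary subtlety is ensuring that $c$ is not proportional to~$a$, which in the main range is automatic from the choice $y_1=0$ and in the low-dimensional range is built into the chord construction.
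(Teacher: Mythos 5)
Your reduction to the sublattice $L=\{c\in\ZZ^{n}:Q(a,c)=0\}$ (your $B(a,c)=0$) has a fatal gap. An isotropic $c\in L$ with $c\not\parallel a$ exists if and only if the plane spanned by $a$ and $c$ is totally isotropic, i.e.\ if and only if $Q$ has Witt index at least $2$ over $\QQ$. For an isotropic form of Witt index $1$, for instance $Q=x_{1}x_{2}+x_{3}^{2}+x_{4}^{2}+x_{5}^{2}$ with $n=5$, the form $\widetilde{Q}$ you obtain on $L/\ZZ a$ is anisotropic, so there is no admissible $c$ at all: your second application of Theorem~\ref{thm d} (with $\eta'=1$, $\xi'=0$) yields nothing, since its hypothesis is then satisfied by $x=0$ and its conclusion does not even guarantee a nonzero solution. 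Thus the ansatz $b=a+\eta c$ with $Q(a,c)=0$, $Q(c)=0$ fails in general, and it fails precisely in the range $n\geq 5$ that your main argument is meant to cover; the ``chord/reflection'' device you relegate to an ad hoc sketch for $n=3,4$ is in fact the essential mechanism for every $n$, and your sketch of it (how to force the congruence $b\equiv\xi\ (\modu\ \eta)$ and how the exponents come out) is not carried out.

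For comparison, the paper does not insist on $Q(a,b)=0$. Starting from a minimal solution $a$ bounded via Theorem~\ref{thm d}, it projects $\ZZ^{n}$ onto $a^{\perp}$, takes the successive minima $q_{i}$ with integral lifts $t_{i}=q_{i}+\alpha_{i}a$, and lets $t_{r}$ be the first lift with $Q(a,t_{r})\neq 0$. When some earlier $t_{j}$ is isotropic, it takes $b=a+\eta t_{j}$ (this is exactly your construction, used only when it is available). Otherwise it forms the reflection $x^{(1)}=Q(t_{r})a-2Q(t_{r},a)t_{r}$, which is an isotropic vector but breaks the congruence, and repairs the congruence with a second reflection in a vector $z'\equiv t_{r}\ (\modu\ M)$ furnished by Lemma~\ref{lem 1}, where $M$ is built from $\eta$ and the prime powers of $\eta$ dividing $Q(t_{r})$: then $x^{(2)}=Q(z')x^{(1)}-2Q(z',x^{(1)})z'\equiv Q(t_{r})^{2}a\ (\modu\ M)$, so $b=N^{-1}x^{(2)}\equiv m\xi\ (\modu\ \eta)$ with $(m,\eta)=1$, and Theorem~\ref{thm n} follows from Proposition~\ref{prop 1} by multiplying by an inverse $m'$ of $m$ modulo $\eta$ with $|m'|\leq\eta$ (which is where the extra powers of $\eta$ in the stated bounds come from). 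Your exponent bookkeeping is also unchecked, but the decisive missing idea is the handling of the Witt index $1$ case just described.
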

In the case of $n=3$ we can get much better bounds.
\begin{theorem}\label{thm 3}
	Let $Q$ be a nondegenerate quadratic form on $\RR^{3}$ with integral coefficients. Let $\xi\in \ZZ^{3}$ and $\eta \in \NN$. Assume that there is a non-zero integral solution to the system (\ref{eq20}).
	Then there exists two linearly independent integral vectors $a$ and $b$ satisfying (\ref{eq20}) such that 
	\begin{equation}
	\|a\| \|b\|\ll H^{4}\eta^{6} | \det Q|^{4}
	\end{equation}
	where the implied constants depends only on $n$.
\end{theorem}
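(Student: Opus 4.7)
My plan is to combine Theorem \ref{thm d} with a Davenport-type parametrization of the conic $\{Q=0\}$ through a rational point in order to produce a second linearly independent small integral solution.

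First, apply Theorem \ref{thm d} with $n=3$ to obtain $a \in \ZZ^3$ with $Q(a)=0$, $a \equiv \xi \pmod{\eta}$ and $\|a\|$ bounded as in \eqref{eq37}. Let $B$ denote the symmetric bilinear form associated with $Q$. For the second solution I would exploit the classical parametrization of a conic through a rational point: for every $u \in \ZZ^3$ with $B(a,u) \neq 0$, the vector
\[
b(u) := Q(u)\,a - 2B(a,u)\,u
\]
is an integral isotropic vector linearly independent from $a$, with $\|b(u)\| \ll H\|u\|^2\|a\|$. This furnishes all rational points on the conic other than $[a]$.

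The remaining task is to choose $u$ of controlled norm such that $b(u) \equiv \xi \pmod{\eta}$, equivalently (since $a \equiv \xi \pmod{\eta}$) $b(u) - a \in \eta\ZZ^3$. Modulo $\eta$, this amounts to a system of three polynomial congruences in three variables, which cuts out an algebraic subvariety of $(\ZZ/\eta\ZZ)^3$. A geometry-of-numbers / pigeonhole argument on $\ZZ^3$, paying careful attention to the primes $p \mid \gcd(\eta,\det Q)$ where the conic $\{Q=0\}$ becomes singular modulo $p$, should yield a $u$ whose norm is bounded by a power of $\eta$ and $|\det Q|$. Substituting the resulting estimate into $\|b(u)\| \ll H\|u\|^2\|a\|$ and combining with the bound on $\|a\|$ will produce the final product bound $\|a\|\|b\| \ll H^4 \eta^6 |\det Q|^4$.

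The hard part will be twofold. First, I must obtain sharp exponents on $\eta$ and $|\det Q|$ in the bound for $\|u\|$: the straightforward pigeonhole is too weak, and the argument will need to exploit the quadratic (as opposed to linear) nature of the congruence $b(u) \equiv a \pmod{\eta}$ together with local information at the ramified primes $p \mid \gcd(\eta, \det Q)$. Second, one must reconcile the sharp claimed bound with the two-term \emph{maximum} in \eqref{eq37}: in the regime where the $\eta^3 H^3$ alternative dominates (that is, when $|\det Q|^2 < H$), a separate argument—replacing the use of Theorem \ref{thm d} by a Cassels-type bound on a reduced form and controlling the congruence adjustment—is needed to avoid the $H^3$ factor and so recover the claimed exponent $H^4$ in the product.
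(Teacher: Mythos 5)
Your overall strategy (take one small solution $a$ of the system and produce a second isotropic vector by reflecting through $a$ via $u\mapsto Q(u)a-2B(a,u)u$) is indeed the paper's starting point, but as written the proposal has genuine gaps precisely at the two places you flag, and the quantitative skeleton you set up cannot reach the stated bound. First, the single-reflection estimate $\|b(u)\|\ll H\|u\|^2\|a\|$ is too lossy: with $\|a\|$ of size up to $\eta^2|\det Q|^2H^2$ and any $u$ with $\|u\|\geq 1$, it already gives $\|a\|\|b\|\gg H\|a\|^2$, which exceeds $H^4\eta^6|\det Q|^4$ in $H$ (and you would in fact need $\|u\|\ll \eta H^{-1/2}$, impossible for large $H$). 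The paper avoids this loss by a geometry-of-numbers step that you do not have: take $a$ to be a \emph{minimal} (hence primitive) solution, bounded by Proposition 3 of \cite{Diet} as $\|a\|\ll\eta^2|\det Q|^2H^2$ (this also disposes of your worry about the $\eta^3H^3$ term in \eqref{eq37}), project $\ZZ^3$ onto the plane orthogonal to $a$, and use Minkowski's second theorem to find a lift $t$ of a successive minimum; reflecting in $z=\alpha a+t$, which lies very close to the line $\RR a$, and invoking Dietmann's Lemma 20 gives $\|x^{(1)}\|\ll H$ (resp. $\ll H^2\|a\|^{-1}$ in the second case), with no factor $\|a\|\|u\|^2$ at all.

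Second, you propose to impose the congruence $b(u)\equiv\xi\pmod{\eta}$ directly by choosing the residue of $u$, and you concede you do not know how to make the pigeonhole work; the paper never solves that congruence. Instead it performs a \emph{second} reflection: choose $\alpha$ so that $p^{\lambda(p)+\pi(p)+1}\nmid Q(z)$ for every $p\mid\eta$ (Dietmann's valuation trick), which forces $\prod_{p\mid\eta}p^{l(p)}\le 2|\det Q|$ where $p^{l(p)}\|Q(z)$; then pick, via Lemma \ref{lem 1}, a vector $z'\equiv z\pmod{M}$ with $M=\eta\prod p^{2l(p)}$ keeping $\{a,x^{(1)},z'\}$ independent, and set $x^{(2)}=Q(z')x^{(1)}-2Q(z',x^{(1)})z'$. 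Automatically $x^{(2)}\equiv Q(z)^2a\pmod{M}$, so $N^{-1}x^{(2)}$ (with $N=\prod p^{2l(p)}$) is an integral isotropic vector congruent to $m\xi$ with $(m,\eta)=1$; multiplying by the inverse $m'$ of $m$ modulo $\eta$, $|m'|\le\eta$, restores the congruence \eqref{eq20} and accounts for the passage from $\eta^4$ in Proposition \ref{prop 3} to $\eta^6$ in the theorem, while the factor $|\det Q|^4$ comes exactly from the bound on $N$. Without these ingredients — minimal primitive $a$, the projected-lattice/Lemma 20 bound on $x^{(1)}$, the two-step reflection with division by $N$ and the unit correction, and the $2|\det Q|$ bound on the bad-prime part of $Q(z)$ — your outline does not yield the exponents $H^4\eta^6|\det Q|^4$, and it also omits the degenerate case $\eta\mid\xi$ (handled in the paper by Davenport's theorem) and the verification that the two solutions are linearly independent.
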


The following Theorem proves the existence of small height integral zeros of system (\ref{eq20}) avoiding a given set of hyperplanes.
	\begin{theorem}\label{thm}
		Let $Q$ be a nondegenerate quadratic form and $L_{1},\ldots, L_{k}$ be non zero linear forms in $n\geq 3$ variables with integral coefficients. Let $\xi\in \ZZ^{n}$, $\eta \in \NN$ and $\varepsilon>0$. Assume that there is a non zero integral solution to the system (\ref{eq20}).
		Then there exists $a\in \ZZ^{n}$ satisfying (\ref{eq20}) and such that $L_{i}(a)\neq 0 \text{ for } 1\le i\le k$ and
		\begin{equation}\label{eq43}
		\|a\| \ll \left\{ \begin{array}{l}
		\eta^{3}H^{4}\max\{\eta^{3}|\Delta|^{2}H^{2}, \eta^{3}H^{3} \}  \hspace{2.7cm}\;\;\text{ if  }\; n=3\\
		\eta^{3}H^{4}\max \{\eta^{9}|\Delta|^{4}H^{5}, \eta^{20}H^{8} \} \; \hspace{2.5cm}\;\;\text{ if }\; n=4\\
		\eta^{4+\varepsilon +\frac{n}{n-4}} \Theta^{\varepsilon+\frac{2}{n-4}} H^{4+\varepsilon +\frac{n^{2}-3n+2}{n-4}} |\Delta|^{\varepsilon +\frac{3n+1}{n-4}} \hspace{1.2cm}\text{ if }\; n\geq 5
		\end{array}
		\right.
		\end{equation}
		where the implied constants depends only on $n$ and $k$.
	\end{theorem}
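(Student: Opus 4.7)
The plan is to combine Theorem~\ref{thm d} with a secant-line and counting argument, adapting the hyperplane-avoidance methods of Fukshansky~\cite{Len} and Dietmann~\cite{Diet 1} to the congruence setting. First, apply Theorem~\ref{thm d} to produce a nonzero $v \in \ZZ^n$ satisfying (\ref{eq20}) with $\|v\| \leq T$, where $T$ denotes the right-hand side of (\ref{eq37}). If $L_i(v) \neq 0$ for every $i$, then $a := v$ already satisfies the conclusion, since the extra factor $\eta^3 H^4$ in (\ref{eq43}) is at least $1$.

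Otherwise, exploit the secant-line parametrization of $\{Q = 0\}$ through $v$: for each $u \in \ZZ^n$, the vector
\[
a_u \;:=\; Q(u)\, v \;-\; 2\, B(v, u)\, u,
\]
with $B$ the symmetric bilinear form associated to $Q$, is an integer zero of $Q$ with $\|a_u\| \ll_n H\, \|v\|\, \|u\|^2$. A direct computation shows that $a_u \pmod{\eta}$ depends only on $u \pmod{\eta}$, so the next step is to isolate a non-empty set $\mathcal{U}$ of residue classes $u_0 \pmod{\eta}$ (possibly after enlarging $\eta$ by a bounded factor) for which $a_u \equiv \xi \pmod{\eta}$, and then restrict $u$ to such a class.

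For each $i$, the equation $L_i(a_u) = 0$ reduces to the quadratic relation $Q(u) L_i(v) - 2\, B(v, u)\, L_i(u) = 0$, defining a hypersurface $Z_i$ in $u$-space. A standard geometry-of-numbers estimate gives $O((U/\eta)^{n-1})$ integer points of $Z_i$ in the fixed residue class with $\|u\| \leq U$, whereas the total number of such $u$ is $\gg (U/\eta)^n$. Hence for $U \gg k \eta$, a pigeonhole argument yields a $u$ with $\|u\| \leq U$, $a_u \equiv \xi \pmod{\eta}$, and $L_i(a_u) \neq 0$ for every $i$. Choosing $U$ of order $\eta^{3/2} H^{3/2}$ then gives $\|a_u\| \ll H\, \|v\|\, U^2 \ll T \cdot \eta^3 H^4$, which matches (\ref{eq43}).

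The hard part will be the second step, namely identifying the set $\mathcal{U}$ and showing it is non-empty. The congruence $a_{u_0} \equiv \xi \pmod{\eta}$ unravels as $Q(u_0)\,\xi - 2\,B(\xi, u_0)\, u_0 \equiv \xi \pmod{\eta}$, whose solvability depends delicately on the arithmetic of $Q$ modulo prime divisors of $\eta$ (for instance, via quadratic residue conditions on quantities like $-Q(\xi)$). One may be forced to pass to a multiple $\eta' = c \eta$ with $c$ controlled by the data, and tracking the effect of this enlargement on the final bound is the technical heart of the argument. A secondary and more routine issue is ensuring the hypersurface counting on each $Z_i$ is uniform in its coefficients, in particular when $Z_i$ happens to be reducible.
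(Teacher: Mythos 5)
Your overall strategy (start from the solution given by Theorem~\ref{thm d}, move along secant lines on $\{Q=0\}$, and avoid the hyperplanes by a non-vanishing argument) is in the right spirit, but the step you yourself flag as ``the technical heart'' is a genuine gap, and it is exactly the point where the paper's argument differs. Requiring a residue class $u_{0} \;(\modu\ \eta)$ with $Q(u_{0})\xi-2Q(\xi,u_{0})u_{0}\equiv \xi\;(\modu\ \eta)$ is too rigid: there is no reason such a class exists, and no bounded enlargement $\eta'=c\eta$ is produced or controlled in your sketch, so the construction cannot be completed as stated. The paper avoids this solvability problem entirely by two devices. First, it only asks the new vector to be congruent to $m\xi$ with $(m,\eta)=1$, and then corrects by multiplying with the inverse $m'$, $|m'|\le\eta$, which costs a single factor $\eta$ in the bound. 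Second, instead of a single secant through $x$, it performs a double reflection: take a small $t$ with $Q(t)\neq 0$ (Lemma~\ref{lem 2}), set $z=Q(t)x-2Q(x,t)t$, and consider $u_{q}=Q(q)z-2Q(q,z)q$ with $q\equiv t\;(\modu\ M)$, $M=\eta\prod_{p\mid\eta}p^{2l(p)}$; since $u_{t}=Q(t)^{2}x$, one automatically gets $u_{q}\equiv Q(t)^{2}x\;(\modu\ M)$, and after dividing by $N=\prod_{p\mid\eta}p^{2l(p)}$ the congruence holds up to a unit $m$ mod $\eta$ --- no arithmetic conditions on $Q$ mod $\eta$ ever enter. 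The hyperplane avoidance is then done deterministically by applying Lemma~\ref{lem 2} to $f(y)=L_{1}(u_{y})\cdots L_{k}(u_{y})$ (nonzero because a nondegenerate form in $n\ge 3$ variables is not a product of two linear forms), rather than by your lattice-point count and pigeonhole; your counting route could be made to work once a valid residue class is in hand, but without one it has nothing to count over.

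Two secondary points. Theorem~\ref{thm d} does not guarantee a \emph{nonzero} solution of bounded height: when $\eta\mid\xi$ the vector it produces may be $0$, and the paper treats this separately (Case 1), using Dietmann's hyperplane-avoidance theorem for $Q(x)=0$ and scaling by $\eta$; your sketch silently assumes $v\neq 0$. Also, in your counting step the real danger is not that $Z_{i}$ is reducible but that $L_{i}(a_{u})$ could vanish identically in $u$, which must be excluded by the same ``$Q$ is not a product of two linear forms'' argument the paper uses.
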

The proof of this Theorem closely follows the proof of case 1 of Theorem from \cite{Diet 1}.


	\section{Preliminary Lemmas}
	\begin{lemma}\label{lem 1}
		Let $\{x_{1},x_{2}, \ldots,x_{n-1}\} $ be a linearly independent subset of $\RR^{n}$. Let $z \in \ZZ^{n}$ and $M \in \NN$. Then there exists $x_{n}\in \ZZ^{n}$ such that $\{x_{1}, x_{2},\ldots, x_{n}\}$ are linearly independent and 
		\begin{equation}\label{eq33}
			x_{n} \equiv z (\modu M) \;\text{ and }\; \|x_{n}\|\ll M.
		\end{equation}
		where the implied constant depends only on $n$.
		
		\begin{proof}
			Let $r\in \ZZ^{n}$ be such that $r \equiv z (\modu M) \text{ and } \|r\|\ll M$. If $\{x_{1}, x_{2},\ldots, r\}$ are linearly independent, then we are done. So assume $\{x_{1}, x_{2},\ldots, x_{n-1}, r\}$ are linearly dependent. Then $r\in \text{span}\{x_{1},\ldots,x_{n-1}\}$. Since $\{x_{1},x_{2}, \ldots,x_{n-1}\} $ are linearly independent, there exists $i\in \{1,\ldots,n\}$ such that $\{x_{1}, x_{2},\ldots, x_{n-1}, e_{i}\}$  are linearly independent where $e_{i}$ is a standard unit vector. Let $x_{n}=r+e_{i}M$. Then $\{x_{1}, x_{2},\ldots, x_{n}\}$ are linearly independent and $x_{n}$ satisfies (\ref{eq33}).
		\end{proof}
	\end{lemma}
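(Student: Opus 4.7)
The plan is to first produce some small integer vector in the congruence class of $z$ modulo $M$, and then, if this vector happens to lie in the hyperplane $V:=\spn\{x_1,\dots,x_{n-1}\}$, to correct it by adding an integer multiple of $M$ along a coordinate direction that escapes $V$. The key point enabling this maneuver is that shifting by $M e_i$ for any standard basis vector $e_i$ preserves the congruence class modulo $M$.

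First I would reduce the coordinates of $z$ modulo $M$ to produce $r\in\ZZ^n$ with $r\equiv z\pmod{M}$ and $\|r\|\leq \sqrt{n}\,M$, so in particular $\|r\|\ll M$ with an implied constant depending only on $n$. If $\{x_1,\dots,x_{n-1},r\}$ already happens to be linearly independent, the lemma is proven by simply taking $x_n=r$.

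Otherwise $r\in V$. Since $\dim V=n-1<n$ and $\spn\{e_1,\dots,e_n\}=\RR^n$, at least one standard basis vector $e_i$ does not lie in $V$; I then set $x_n:=r+Me_i$. The congruence $x_n\equiv r\equiv z\pmod{M}$ is immediate, and the triangle inequality gives $\|x_n\|\leq \|r\|+M\ll M$. Linear independence of $\{x_1,\dots,x_n\}$ follows from the observation that $r\in V$ while $Me_i\notin V$ (since $e_i\notin V$ and $M\neq 0$), hence $x_n=r+Me_i\notin V$.

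There is no serious obstacle in this argument; the essential point is that the congruence condition leaves an entire shifted lattice $z+M\ZZ^n$ to search in, and this lattice contains enough directions transverse to $V$ that avoiding the hyperplane costs at most a shift of size $O(M)$. The only mild care needed is ensuring the implied constants depend only on $n$, which is clear since both the initial bound $\|r\|\leq \sqrt{n}\,M$ and the choice of $e_i$ from among finitely many standard basis vectors are controlled purely by the dimension.
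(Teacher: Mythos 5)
Your proposal is correct and follows essentially the same argument as the paper: reduce $z$ modulo $M$ to obtain a small representative $r$, and if $r$ lies in $\spn\{x_1,\dots,x_{n-1}\}$, correct it by adding $Me_i$ for a standard basis vector $e_i$ outside that span. Your justification of the linear independence (noting $r\in V$ while $Me_i\notin V$, so the sum leaves $V$) is a slightly more explicit version of the same step in the paper's proof.
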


The following Lemma is an analogue of Theorem 3.1 of \cite{Len} with congruence conditions.

\begin{lemma}\label{lem 2}
	Let $f(x_{1},\ldots, x_{n}) \in \RR [x_{1},\ldots, x_{n}]$ be a polynomial of degree $m$ which is not identically 0. Let $z \in \ZZ^{n}$ and $M \in \NN$. Then there exists $q\in \ZZ^{n}$ such that $f(q)\neq 0$,
	\begin{equation}\label{eq34}
		q \equiv z (\modu M) \;\text{ and }\; \|q\|\ll M.
	\end{equation}
	 where the implied constant depends only on $m$.
\end{lemma}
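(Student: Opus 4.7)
The plan is to parametrize the congruence class of $z$ and reduce to an unconstrained non‑vanishing statement, which is essentially Fukshansky's Theorem 3.1 applied to a transformed polynomial. First I would replace $z$ by its coordinatewise reduction mod $M$, producing $z' \in \ZZ^{n}$ with $z' \equiv z (\modu M)$ and $\|z'\| \leq M\sqrt{n}$. Every $q \in \ZZ^{n}$ with $q \equiv z (\modu M)$ is then uniquely of the form $q = z' + M y$ for some $y \in \ZZ^{n}$, so the congruence in \eqref{eq34} is automatic under this parametrization.

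Next, define the auxiliary polynomial
\[
g(y_{1}, \ldots, y_{n}) := f(z' + M y) \in \RR[y_{1}, \ldots, y_{n}].
\]
Because $M \geq 1$, the map $y \mapsto z' + M y$ is an invertible affine change of variables over $\RR$, so $g$ has the same total degree $m$ as $f$ and, crucially, is not identically zero. Thus the lemma reduces to finding $y \in \ZZ^{n}$ with $g(y) \neq 0$ and $\|y\| \ll_{m} 1$, since then $\|q\| \leq \|z'\| + M\|y\| \ll M$ with an implied constant depending only on $m$ (and $n$).

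To produce such a $y$ I would invoke the standard combinatorial non‑vanishing fact: a nonzero polynomial of total degree $m$ in $n$ variables does not vanish identically on the grid $\{0, 1, \ldots, m\}^{n}$. This is proved by a short induction on $n$ (for $n = 1$, a nonzero degree‑$m$ polynomial has at most $m$ roots; for the inductive step, expand in powers of $y_{n}$, pick a nonzero coefficient polynomial in $y_{1}, \ldots, y_{n-1}$, apply the induction hypothesis, and then choose $y_{n} \in \{0, \ldots, m\}$ avoiding the finitely many roots of the resulting univariate polynomial). Any such $y$ satisfies $\|y\| \leq m\sqrt{n}$.

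I do not expect any real obstacle: the only point to verify carefully is that the affine substitution preserves non‑triviality of the polynomial, which is immediate from $M \neq 0$, and that one can absorb $\|z'\|$ into the $\ll M$ bound, which follows from the preliminary reduction of $z$ modulo $M$.
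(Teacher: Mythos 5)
Your proposal is correct and amounts to the paper's own argument in different packaging: writing $q = z' + My$ and applying the grid non-vanishing fact to $g(y) = f(z'+My)$ is the same induction on the number of variables that the paper runs directly on $f$, with the arithmetic progression $r, r+M, \ldots, r+mM$ playing the role of your grid $\{0,1,\ldots,m\}^{n}$. (As you note, the constant in your bound also depends on $n$ through the Euclidean norm, not only on $m$; the paper's own proof has the same feature, and it is harmless in the applications.)
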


\begin{proof}
We proceed by induction on $n$. Suppose $n=1$. Then $f(x_{1})$ is a non-zero polynomial of degree $m$ and hence has atmost $m$ roots. Let $r\in \ZZ$ be such that $r \equiv z (\modu M) \text{ and } |r|<M$. Consider the set $$S=\{r,r+M,r+2M,\ldots, r+mM\}.$$
	Then $|S|=m+1$. Hence there exists an element $q$ of $S$ such that $f(q)\neq 0$. It can be seen easily that $q$ also satisfies (\ref{eq34}). This proves the Lemma when $n=1$.\\
	Now suppose that the Lemma holds for all polynomials in $k$ variables for $1\le k<n$. Let $f$ be a non zero polynomial in $n$ variables. Then there exists $q^{(1)}\in \ZZ^{n-1}$ such that $f(q^{(1)}, x_{n})$ is a non zero polynomial in $x_{n}$. From the case of $n=1$, there exists $q_{n}\in \ZZ$ such that $f(q^{(1)},q_{n})\neq 0$, $q_{n} \equiv z_{n} (\modu M)$  and $|q_{n}|\ll M$.\\
	 Now, define $$g(x_{1},\ldots,x_{n-1})=f(x_{1},\ldots,x_{n-1},q_{n}).$$
	 Then $g$ is a non zero polynomial in $n-1$ variables since $f(q^{(1)},q_{n})\neq 0$. By the induction hypothesis, there exists $q^{(2)}\in \ZZ^{n-1}$ such that $g(q^{(2)})\neq 0$, $q^{(2)}\equiv (z_{1},\ldots,z_{n-1}) (\modu M) $ and $\|q^{(2)}\|\ll M$. Let $q=(q^{(2)},q_{n})$. Then $f(q)\neq 0$ and $q$ satisfies (\ref{eq34}).
\end{proof}


 \section{Proofs of Theorems \ref{thm n} and \ref{thm 3}}
 Theorem \ref{thm n} follows from the following Proposition by observing that $(m,\eta)=1$ implies there exists $m'\in \ZZ$ such that $m m'\equiv 1 (\modu \eta )$ and $|m'|\le \eta$, 
\begin{proposition}\label{prop 1}
	Let $Q$ be a nondegenerate quadratic form in $n\geq 3$ variables with integral coefficients. Let $\xi\in \ZZ^{n}$ and $\eta \in \NN$. Let $\varepsilon>0$. Assume that there is a non-zero integral solution to the system
	\begin{equation}\label{eq38}
	\begin{split}
	& Q(x)=0\\
	& x \equiv m\xi (\modu \eta)
	\end{split}
	\end{equation}
	for some $m\in \ZZ$ with $(m,\eta)=1$.
	Then there exists two linearly independent integral vectors $a$ and $b$ satisfying (\ref{eq38}) such that 
	\begin{equation}\label{eq44}
	\|a\|^{5} \|b\| \ll \left\{ \begin{array}{l}
	\max\{\eta^{32}|\Delta|^{20}H^{3n+18}, \eta^{32}H^{3n+28} \}  \hspace{2.6cm}\text{ if  }\; n=3\\
	\max \{\eta^{92}|\Delta|^{40}H^{3n+48}, \eta^{202}H^{3n+78} \}  \hspace{2.5cm}\text{ if }\; n=4\\
	\eta^{12+\varepsilon +\frac{10n}{n-4}} \Theta^{\varepsilon+\frac{20}{n-4}} H^{\varepsilon +\frac{13n^{2}-44n+28}{n-4}} |\Delta|^{\varepsilon +\frac{10(3n+1)}{n-4}} \hspace{1cm}\text{ if } \; n\geq 5
	\end{array}
	\right.
	\end{equation}
	where the implied constants depend only on $n$.
\end{proposition}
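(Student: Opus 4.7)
The plan is to apply Theorem \ref{thm d} to produce the first solution $a$, and then to use the secant parametrization of the quadric $\{Q=0\}$ through $a$ together with the elementary lemmas of Section 3 to produce a second, linearly independent solution $b$.

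First I would invoke Theorem \ref{thm d} for the system $Q(x)=0$, $x\equiv m\xi\pmod\eta$, which has a non-zero integer solution by assumption. Since the bound (\ref{eq37}) depends only on $Q$, $\eta$, and $n$, and not on the particular residue class, it yields $a\in\ZZ^n\setminus\{0\}$ with $Q(a)=0$, $a\equiv m\xi\pmod\eta$, and $\|a\|$ bounded by the right-hand side of (\ref{eq37}).

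Next I would use the standard secant parametrization of the isotropic quadric centered at $a$: for every $c\in\ZZ^n$ with $B(a,c)\ne 0$ (where $B$ is the bilinear polarization of $Q$), the vector
\begin{equation*}
b(c)\;=\;B(a,c)\,c\;-\;Q(c)\,a
\end{equation*}
satisfies $Q(b(c))=0$ by direct expansion and is linearly independent of $a$ whenever $B(a,c)\ne 0$. Reducing mod $\eta$ and using $a\equiv m\xi\pmod\eta$ together with the consequence $Q(\xi)\equiv 0\pmod\eta$ of $Q(a)=0$ and $(m,\eta)=1$, one finds $b(c)\equiv m\bigl(B(\xi,c)\,c-Q(c)\,\xi\bigr)\pmod\eta$, so that the requirement $b(c)\equiv m'\xi\pmod\eta$ with $(m',\eta)=1$ reduces to the two conditions $B(\xi,c)\equiv 0\pmod\eta$ and $\gcd(Q(c),\eta)=1$ (the resulting $m'$ is then $-mQ(c)\pmod\eta$, a unit). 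I would then find a small $c$ satisfying these together with the non-vanishing $B(a,c)\ne 0$ and linear independence from $a$, by combining Lemma \ref{lem 2}, applied prime by prime via the Chinese Remainder Theorem to force $\gcd(Q(c),\eta)=1$ and applied to the linear form $B(a,\cdot)$ to force $B(a,c)\ne 0$, with Lemma \ref{lem 1} to guarantee linear independence with $a$.

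The hard part will be assembling the bounds with the correct exponents in (\ref{eq44}). Once a small $c$ is produced, the estimate $\|b(c)\|\le|B(a,c)|\,\|c\|+|Q(c)|\,\|a\|\ll H\,\|a\|\,\|c\|^2$ combined with the bound on $\|a\|$ from (\ref{eq37}) gives $\|a\|^5\|b\|\ll H\,\|a\|^{6}\,\|c\|^2$; to match the three cases of (\ref{eq44}) one must bound $\|c\|$ polynomially in $\eta$, $\|a\|$, $H$, and $|\Delta|$, which in turn will require a further application of Theorem \ref{thm d} (or a careful variant of the lemmas) to an auxiliary quadratic form on the sublattice $L=\{c\in\ZZ^n:B(\xi,c)\equiv 0\pmod\eta\}$ whose coefficients, determinant, and modulus depend polynomially on $\|a\|$. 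The reason $\|a\|^5$ (rather than $\|a\|$) appears on the left-hand side of (\ref{eq44}) is precisely that this auxiliary application introduces powers of $\|a\|$ into the bound on $\|c\|$, and these feed into $\|b\|$ with a quadratic amplification through the parametrization above.
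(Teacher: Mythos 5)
The central gap is your reduction to the two conditions $B(\xi,c)\equiv 0\ (\modu \eta)$ and $\gcd(Q(c),\eta)=1$: such a $c$ need not exist at all. Take $Q(x)=x_1x_2-px_3^{2}$, $\eta=p$ an odd prime, $\xi=(1,0,0)$ (the system (\ref{eq38}) is solvable, e.g.\ by $x=\xi$). Then $B(\xi,c)\equiv 0\ (\modu p)$ forces $c_2\equiv 0\ (\modu p)$, and on that sublattice $Q(c)\equiv c_1c_2\equiv 0\ (\modu p)$ identically, so $\gcd(Q(c),\eta)=1$ is unattainable; your prime-by-prime CRT plan cannot repair this (note also that Lemma \ref{lem 2} only yields $f(q)\neq 0$ in $\ZZ$, not $f(q)\not\equiv 0$ modulo $p$). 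The paper's proof is organized precisely to avoid having to make the auxiliary $Q$-value coprime to $\eta$: it records the valuations $l(p)$ of $Q(t_r)$ (or of $Q(a+t_r)$) at the primes $p\mid\eta$, performs a \emph{second} reflection in a vector $z'\equiv t_r\ (\modu M)$ with $M=\eta\prod_{p\mid\eta}p^{2l(p)}$ supplied by Lemma \ref{lem 1}, gets $x^{(2)}\equiv Q(t_r)^{2}a\ (\modu M)$, and then divides by $N=\prod_{p\mid\eta}p^{2l(p)}$, so the resulting multiplier $Q(t_r)^{2}/N$ is automatically a unit mod $\eta$. Some device of this kind is indispensable, and it is exactly what your single-reflection construction of $b$ lacks.

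There are secondary gaps as well. First, if $\eta\mid\xi$ then $0$ satisfies (\ref{eq38}) and Theorem \ref{thm d} may return $x=0$, while your parametrization needs $a\neq 0$; the paper treats this case separately via Davenport's theorem, producing $a=\eta u$, $b=\eta v$. Second, the quantitative heart of the proposition — bounding the auxiliary vector — is deferred in your proposal to ``a further application of Theorem \ref{thm d}'' on the sublattice $L$, but the problem there is not of the form that theorem handles (you would need $Q(c)$ to be a unit mod $\eta$, not $Q(c)=0$). The paper instead takes $a$ of \emph{minimal} norm (hence primitive), projects $\ZZ^{n}$ onto the hyperplane orthogonal to $a$, applies Minkowski's second theorem to the successive minima of that projection, and uses the lower bounds $\|q_i\|\gg H^{-1/2}$ to control $\|t_r\|$; this is where the exponents in (\ref{eq44}) actually come from, and none of that machinery appears in your sketch. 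Finally, a small point: with the usual symmetric bilinear form $Q(\cdot,\cdot)$ the isotropic combination is $2Q(a,c)c-Q(c)a$ (equivalently the paper's $Q(c)a-2Q(c,a)c$); your $B(a,c)c-Q(c)a$ is isotropic only under the convention $B(x,y)=Q(x+y)-Q(x)-Q(y)$, so the normalization should be stated.
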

\begin{proof}
	Suppose $0$ is a solution of (\ref{eq38}). This implies $\eta \mid \xi$. Since $Q(x)=0$ has a non-zero integral solution, by Theorem 1 of \cite{Dav} there exist two linearly independent integral solutions $u, v$ of $Q(x)=0$ such that 
	\begin{equation*}
	\|u\| \|v\|\ll H^{n-1}
	\end{equation*}
	Let $a=\eta u, b=\eta v$. Then $a, b$ are two linearly independent integral vectors satisfying (\ref{eq38}) and 
	\begin{equation*}
	\|a\| \|b\| \ll \eta^{2} H^{n-1}
	\end{equation*}
	Thus (\ref{eq44}) holds.\\
	Now assume that $0$ is not a solution of (\ref{eq38}). Then $\xi\neq 0$. Without loss of generality, we may assume that $\xi$ is primitive. Let $a$ be an integral vector satisfying (\ref{eq38}) such that $\|a\|$ is least. Then by Theorem \ref{thm d}, we have
	\begin{equation}\label{eq21}
	\|a\| \ll \left\{ \begin{array}{l}
	\max\{\eta^{3}|\Delta|^{2}H^{2}, \eta^{3}H^{3} \}  \hspace{2.7cm}\text{ if  }\; n=3\\
	\max \{\eta^{9}|\Delta|^{4}H^{5}, \eta^{20}H^{8} \} \; \hspace{2.5cm}\text{ if }\; n=4\\
	\eta^{1+\varepsilon +\frac{n}{n-4}} \Theta^{\varepsilon+\frac{2}{n-4}} H^{\varepsilon +\frac{n^{2}-3n+2}{n-4}} |\Delta|^{\varepsilon +\frac{3n+1}{n-4}} \;\;\;\;\text{ if }\; n\geq 5
	\end{array}
	\right.
	\end{equation}
	Since $a\neq 0$, we get that $a$ is primitive. Indeed, for a prime $p$ if $p\mid a$, then $(p,\eta)=1$ since $\xi$ is primitive. This implies that $(1/p)a$ satisfies (\ref{eq38}) contradicting the minimality of $\|a\|$. \\
	Let $E$ be the $n-1$ dimensional space through origin perpendicular to the vector $a$. Denote by $\Gamma$ the projection of $\ZZ^{n}$ onto $E$. Then $\Gamma$ is a lattice in $E$ of determinant $\|a\|^{-1}$. Denote by $q_{1},\ldots, q_{n-1}$ the successive minima of the $n-1$ dimensional lattice $\Gamma$. Then
	\begin{equation*}
	\|q_{1}\|=\min \{\|q\|: q\in \Gamma\setminus\{0\}\}
	\end{equation*}
	\begin{equation*}
	\|q_{i}\|=\min \{\|q\|: q\in \Gamma \text{ and } q,q_{1},\ldots,q_{i-1} \text{ are linearly independent}\} \text{ for }  2\leq i\leq n-1
	\end{equation*}
	\begin{equation} \label{eq39}
		\|q_{1}\|\le \|q_{2}\|\le \ldots \le \|q_{n-1}\|.
	\end{equation}
	By Minkowski's second theorem, we have
	\begin{equation}\label{eq22}
	\|q_{1}\| \cdots \|q_{n-1}\| \ll \|a\|^{-1}
	\end{equation}
	By definition of $\Gamma$, for $1\le i\le n-1$ there exists $t_{i} \in \ZZ^{n}$ such that
	\begin{equation}\label{eq24}
		t_{i}=q_{i}+\alpha _{i} a \;\text{ for some }\alpha_{i}\in \RR \text{ with }|\alpha_{i}|\leq 1.
	\end{equation}
	Since $a, q_{i}$ are linearly independent, we have
	\begin{equation}\label{eq50}
		\|t_{i}\|^{2} \le \|q_{i}\|^{2}+\|a\|^{2}.
	\end{equation}
	Since $\{a,q_{1},\ldots, q_{n-1}\}$ are linearly independent, we get that $\{a,t_{1},\ldots, t_{n-1}\}$ are linearly independent.
	Now, consider the linear subspace 
	$$S:=\{x:Q(a,x)=0\}.$$ 
	Since $Q$ is nondegenerate, $t_{i}\notin S$ for some $i$. Denote by $r$ the least positive integer such that $t_{r}\notin S$. Then $Q(a, t_{i})=0$ for all $1\le i\le r-1$ and $Q(a, t_{r})\neq 0$.\\ \\
	\textbf{Case 1}: Suppose there exists $i\in \{1,\ldots,r-1\}$ such that $Q(t_{i})=0$.\\
	Let $j$ be the least such that $Q(t_{j})=0$. Then $Q(t_{k})\neq 0$ for $1\le k\le j-1$. Since $Q(a,t_{k})=0$, we have $Q(q_{k})=Q(t_{k})$. Therefore
	\begin{equation*}
		H \|q_{k}\|^{2}\gg |Q(q_{k})|=|Q(t_{k})| \geq 1
	\end{equation*}
	which implies 
	\begin{equation}\label{eq23}
		\|q_{k}\|\gg H^{-1/2} \text{ for } 1\leq k\leq j-1. 
	\end{equation}
	Using (\ref{eq39}), (\ref{eq22}) and (\ref{eq23}), we get 
	\begin{equation*}
		(H^{-1/2})^{j-1} \|q_{j}\|^{n-j} \ll \|a\|^{-1}
	\end{equation*}
	which implies
	\begin{equation}\label{eq25}
		\|q_{j}\|\ll H^{\frac{j-1}{2(n-j)}} \|a\|^{\frac{-1}{n-j}}.
	\end{equation}
	Now, let $b=a+\eta t_{j}$. Then $b$ satisfies (\ref{eq38}) and $a, b$ are linearly independent.
	\begin{align*}
	 \|a\| \|b\|=\|a\| \|a+\eta t_{j}\| &\le \|a\|^{2}+\eta \|a\| \|t_{j}\|\\
	 & \le \|a\|^{2}+\eta \|a\| (\|q_{j}\|+\|a\|) \; (\text{ by }\ref{eq24})\\
	 & \ll \eta \|a\|^{2}+\eta H^{\frac{j-1}{2(n-j)}} \|a\|^{1-\frac{1}{n-j}} \; (\text{ by } \ref{eq25})\\
	 &\ll \eta \|a\|^{2}+\eta H^{\frac{n-3}{4}} \|a\|^{1-\frac{1}{n-1}}
	\end{align*}
	which gives
		\begin{equation*}
	\|a\| \|b\| \ll \left\{ \begin{array}{l}
	\eta H^{(n-3)/4}\|a\|^{2} \hspace{0.6cm} \text{ if } \|a\| > 1,\\
	\eta H^{(n-3)/4} \hspace{1.4cm} \text{ if } \|a\| \le 1.
	\end{array}
	\right.
	\end{equation*}
	Using (\ref{eq21}) to bound $\|a\|$, it is easy to see that (\ref{eq44}) holds.\\
	\textbf{Case 2}: Suppose that $Q(t_{i})\neq 0$ for all $1\le i\le r-1$.\\
	Then
	\begin{equation*}
	H \|q_{i}\|^{2}\gg |Q(q_{i})|=|Q(t_{i})| \geq 1
	\end{equation*}
	which implies 
	\begin{equation}\label{eq26}
	\|q_{i}\|\gg H^{-1/2} \text{ for } 1\leq i\leq r-1. 
	\end{equation}
	Using (\ref{eq39}), (\ref{eq22}) and (\ref{eq26}), we get 
	\begin{equation*}
	(H^{-1/2})^{r-1} \|q_{r}\|^{n-r} \ll \|a\|^{-1}
	\end{equation*}
	which implies
	\begin{equation}\label{eq27}
	\|q_{r}\|\ll H^{\frac{r-1}{2(n-r)}} \|a\|^{\frac{-1}{n-r}}.
	\end{equation}
	Hence,
	\begin{equation}
		\|q_{r}\| \ll \left\{ \begin{array}{l}
		H^{(n-2)/2}\|a\|^{-1/(n-1)} \hspace{0.5cm} \text{ if } \|a\|> 1,\\
		H^{(n-2)/2}\|a\|^{-1} \hspace{1.5cm} \text{ if } \|a\| \le 1.
		\end{array}
		\right.
	\end{equation}
	 \textbf{Case 2.1} Assume that $Q(t_{r})\neq 0$.\\
	 Let
	$$x^{(1)}=Q(t_{r})a-2Q(t_{r},a)t_{r}.$$
Then $Q(x^{(1)})=0$ and $a,x^{(1)}$ are linearly independent. \\
By (\ref{eq24}), we have $$x^{(1)}=Q(q_{r})a-2Q(q_{r},a)q_{r}$$
and hence, 
\begin{equation} \label{eq30}
	\|x^{(1)}\|\ll H \|q_{r}\|^{2} \|a\|.
\end{equation}
For a prime $p\mid \eta$, define $l(p)$ to be the largest integer such that $p^{l(p)} \mid Q(t_{r})$. Then 

\begin{equation} \label{eq28}
	\prod_{p\mid \eta} p^{l(p)} \le |Q(t_{r})|.
\end{equation}
Let 
\begin{equation*}
	M= \eta \prod_{p\mid \eta} p^{2 l(p)} \text{ and } N=\prod_{p\mid \eta} p^{2 l(p)}.
\end{equation*}
By Lemma \ref{lem 1}, there exists $z'\in \ZZ^{n}$ such that $z'\equiv t_{r}(\textrm{mod}\ M)$, $\{a, x^{(1)}, z'\}$ are linearly independent and 
\begin{equation}\label{eq29}
\|z'\|\ll M. 
\end{equation}
Now, define
$$x^{(2)}=Q(z')x^{(1)}-2 Q(z',x^{(1)})z'.$$
Then $Q(x^{(2)})=0$ and 
$$x^{(2)}\equiv Q(t_{r})^{2}a \  (\textrm{mod}\ M).$$
This implies that
$$ N^{-1}x^{(2)}\equiv ma \ (\textrm{mod}\ \eta)$$
for some $m\in \ZZ$ with $(m,\eta)=1$. Since $a$ satisfies the congruence condition in (\ref{eq38}), we get
$$ N^{-1}x^{(2)}\equiv m\xi \ (\textrm{mod}\ \eta)$$
for some $m\in \ZZ$ with $(m,\eta)=1$.
Let $b=N^{-1}x^{(2)}$. Then $b$ is an integral solution of the system (\ref{eq38}). This implies $b\neq 0$; hence both $Q(z'), Q(z',x^{(1)})$ cannot be zero. This shows that $a, b$ are linearly independent since $\{a, x^{(1)}, z'\}$ are linearly independent. Now,
\begin{align*}
\|a\| \|b\| &= N^{-1} \|a\| \|x^{(2)} \|\\
&\ll H N^{-1} \|a\|\|x^{(1)}\| \|z'\|^{2}\\
&\ll H^{2} N^{-1}  M^{2}\|q_{r}\|^{2}\|a\|^{2} \; (\text{by } (\ref{eq30}), (\ref{eq29}) )\\
&\ll H^{2} \eta^{2} |Q(t_{r})|^{2} \|q_{r}\|^{2}\|a\|^{2} \; (\text{by } (\ref{eq28}))\\
&\ll H^{4} \eta^{2} \|t_{r}\|^{4} \|q_{r}\|^{2}\|a\|^{2}. 
\end{align*}
Using (\ref{eq50}), we get
\begin{equation*}
\|a\| \|b\| \ll \left\{ \begin{array}{l}
\eta^{2} H^{3n-2}\|a\|^{\frac{6n-8}{n-1}} \hspace{0.6cm} \text{ if } \|a\| > 1\\
\eta^{2} H^{3n-2} \|a\|^{-4} \hspace{0.9cm} \text{ if } \|a\| \le 1
\end{array}
\right.
\end{equation*}
which gives 
\begin{equation*}
\|a\|^{5} \|b\| \ll \left\{ \begin{array}{l}
\eta^{2} H^{3n-2}\|a\|^{10} \hspace{0.5cm} \text{ if } \|a\| > 1\\
\eta^{2} H^{3n-2}  \hspace{1.5cm} \text{ if } \|a\| \le 1.
\end{array}
\right.
\end{equation*}
Using (\ref{eq21}) to bound $\|a\|$, it is easy to see that (\ref{eq44}) holds.\\
\textbf{Case 2.2}: Assume that $Q(t_{r})=0$.\\
Let $z=a+t_{r}$. Then $Q(z)=2 Q(a,t_{r})\neq 0$. For a prime $p\mid \eta$, define $l(p)$ to be the largest integer such that $p^{l(p)} \mid Q(z)$. Then 

\begin{equation} \label{eq32}
\prod_{p\mid \eta} p^{l(p)} \le |Q(z)|.
\end{equation}
Let 
\begin{equation*}
	M= \eta \prod_{p\mid \eta} p^{l(p)} \text{ and } N=\prod_{p\mid \eta} p^{l(p)}.
\end{equation*}
By Lemma \ref{lem 1}, there exists $z'\in \ZZ^{n}$ such that $z'\equiv z(\modu M)$, $\{a, t_{r}, z'\}$ are linearly independent and 
\begin{equation}\label{eq31}
\|z'\|\ll M. 
\end{equation}
Now, define
$$x^{(2)}=Q(z') t_{r}-2 Q(z',t_{r})z'.$$
It is easy to check that $Q(x^{(2)})=0$ and 
$$x^{(2)}\equiv -Q(z) a \  (\modu M).$$
This implies
$$ N^{-1}x^{(2)}\equiv ma \ (\modu \eta)$$
for some $m\in \ZZ$ with $(m,\eta)=1$. Since $a$ satisfies the congruence condition in (\ref{eq38}), we get
$$ N^{-1}x^{(2)}\equiv m\xi \ (\modu \eta)$$
for some $m\in \ZZ$ with $(m,\eta)=1$.
Let $b=N^{-1}x^{(2)}$. As in Case 2.1, it follows that $b$ is an integral solution of the system (\ref{eq38}) and $a, b$ are linearly independent. Now,
\begin{align*}
\|a\| \|b\|&= N^{-1} \|a\| \|x^{(2)} \|\\
&\ll H N^{-1} \|a\|\|t_{r}\| \|z'\|^{2}\\
&\ll H N^{-1}  M^{2}\|a\|\|t_{r}\|\; (\text{by } (\ref{eq31}))\\
&\ll H \eta^{2} |Q(z)| \|a\|\|t_{r}\| \; (\text{by } (\ref{eq32}))\\
&\ll H^{2} \eta^{2} \|z\|^{2} \|a\|\|t_{r}\|.
\end{align*}
Hence, by (\ref{eq50}) we get
\begin{equation*}
\|a\| \|b\| \ll \left\{ \begin{array}{l}
\eta^{2} H^{\frac{3n-2}{2}}\|a\|^{4} \hspace{0.8cm} \text{ if } \|a\| > 1\\
\eta^{2} H^{\frac{3n-2}{2}} \|a\|^{-2} \hspace{0.6cm} \text{ if } \|a\| \le 1
\end{array}
\right.
\end{equation*}
which further gives
\begin{equation*}
\|a\|^{5} \|b\| \ll \left\{ \begin{array}{l}
\eta^{2} H^{\frac{3n-2}{2}}\|a\|^{8} \hspace{0.7cm} \text{ if } \|a\| > 1\\
\eta^{2} H^{\frac{3n-2}{2}}  \hspace{1.5cm} \text{ if } \|a\| \le 1
\end{array}
\right.
\end{equation*}
and now using (\ref{eq21}) it can be checked that (\ref{eq44}) holds.\\
\end{proof}

Now, we prove Theorem \ref{thm 3}. It is easy to see that it follows from the Proposition below. The main difference in the proof of Proposition \ref{prop 3} from that of Proposition \ref{prop 1} is we get better bounds in (\ref{eq 3}) in place of (\ref{eq28}), following the proof of Proposition 3 from \cite{Diet}.
\begin{proposition}\label{prop 3}
	Let $Q$ be a nondegenerate quadratic form on $\RR^{3}$ with integral coefficients. Let $\xi\in \ZZ^{3}$ and $\eta \in \NN$. Assume that there is a non-zero integral solution to the system
	\begin{equation}\label{eq}
	\begin{split}
	& Q(x)=0\\
	& x \equiv m\xi (\modu \eta)
	\end{split}
	\end{equation}
	for some $m\in \ZZ$ with $(m,\eta)=1$.
	Then there exists two linearly independent integral vectors $a$ and $b$ satisfying (\ref{eq}) such that 
	\begin{equation}
	\|a\| \|b\|\ll H^{4}\eta^{4} | \det Q|^{4}
	\end{equation}
\end{proposition}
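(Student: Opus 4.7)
The plan is to mirror the proof of Proposition \ref{prop 1} for $n = 3$ verbatim, with one single improvement: the prime-by-prime bound on $N = \prod_{p \mid \eta} p^{l(p)}$ that governs how much we must divide the auxiliary zero by in order to land in the correct residue class mod $\eta$. This is precisely where Dietmann's Proposition 3 of \cite{Diet} secures a saving for three-variable forms, and transplanting that saving into our reflection construction will convert the weak bound $\prod_{p \mid \eta} p^{l(p)} \le |Q(t_r)|$ (equation \eqref{eq28}) into a much sharper one controlled by $\eta$ and $|\det Q|$.

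First I would reduce, as in Proposition \ref{prop 1}, to the case $\xi \neq 0$ (primitive); the case when $0$ solves \eqref{eq} is dispatched by Davenport's theorem scaled by $\eta$. Choose $a$ to be an integral solution of \eqref{eq} of least norm; Theorem \ref{thm d} for $n = 3$ then provides $\|a\| \ll \max\{\eta^{3}|\Delta|^{2}H^{2},\ \eta^{3}H^{3}\}$, and a $p$-coprimality argument shows $a$ is primitive. Project $\ZZ^{3}$ onto $E = a^{\perp}$ to obtain a rank-$2$ lattice $\Gamma$ of covolume $\|a\|^{-1}$, with successive minima $q_{1}, q_{2}$ and lifts $t_{1}, t_{2} \in \ZZ^{3}$ satisfying $t_{i} = q_{i} + \alpha_{i} a$ with $|\alpha_{i}| \le 1$. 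Let $r \in \{1, 2\}$ be minimal with $Q(a, t_{r}) \neq 0$; Minkowski's second theorem combined with $H\|q_{i}\|^{2} \gg 1$ for $i < r$ gives $\|q_{r}\| \ll H^{(r-1)/(2(3-r))} \|a\|^{-1/(3-r)}$, which for $n = 3$ is tight.

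Next, in both subcases $Q(t_{r}) \neq 0$ and $Q(t_{r}) = 0$, construct the auxiliary vector $z'$ congruent to $t_{r}$ (respectively $a + t_{r}$) mod $M$ via Lemma \ref{lem 1}, and build the zero $x^{(2)} = Q(z')x^{(1)} - 2Q(z', x^{(1)})z'$ (respectively $x^{(2)} = Q(z')t_{r} - 2Q(z', t_{r})z'$) which lies in the class $m \xi \pmod{\eta}$ after dividing by $N$. The key replacement for \eqref{eq28} is the following local estimate, to be imported from the proof of Proposition 3 of \cite{Diet}: for each prime $p \mid \eta$, the exponent $l(p) = v_{p}(Q(t_{r}))$ (respectively $v_{p}(Q(z))$) satisfies a bound that forces
\[
N \;=\; \prod_{p \mid \eta} p^{l(p)} \;\ll\; \eta \, |\det Q|^{\,c}
\]
for an explicit small $c$, rather than the crude $N \le |Q(t_{r})|$ which would cost a factor of $\|t_{r}\|^{2}$. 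The mechanism is local: because $a$ lies in the congruence class $m\xi \pmod{\eta}$ and $Q$ is nondegenerate with discriminant $\Delta$, high $p$-divisibility of $Q(t_{r})$ forces $p$ to divide $\Delta$ to a comparable extent, so the excess divisibility is absorbed into $|\det Q|$.

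Plugging this improved $N$ into the bookkeeping chain
\[
\|a\|\|b\| \;=\; N^{-1}\|a\|\|x^{(2)}\| \;\ll\; H^{2} N^{-1} M^{2} \|q_{r}\|^{2} \|a\|^{2},
\]
substituting $M = \eta \cdot N$ (or $\eta \cdot N^{2}$ in Case 2.1), using $\|q_{r}\| \ll H^{1/2}\|a\|^{-1/2}$ from \eqref{eq27} for $n = 3$, and finally invoking the bound on $\|a\|$ from \eqref{eq21} should collapse everything to $\|a\|\|b\| \ll H^{4} \eta^{4} |\det Q|^{4}$. The main obstacle is the local $p$-adic lemma for $l(p)$: establishing it cleanly from the congruence $a \equiv m\xi \pmod{\eta}$ together with $Q(a) = 0$ and the nondegeneracy of $Q$ over $\ZZ_{p}$ requires a careful case analysis on $v_{p}(\Delta)$ versus $v_{p}(\eta)$, exactly in the spirit of Dietmann's argument, and all remaining estimates are routine once this local bound is in hand.
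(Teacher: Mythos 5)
Your plan correctly identifies \emph{where} the saving must come from (the size of $N=\prod_{p\mid\eta}p^{l(p)}$), but the mechanism you propose for it does not exist, and this is the heart of the matter. You assert a passive local estimate: that for the lift $t_r$ itself, high $p$-divisibility of $Q(t_r)$ forces $p$ to divide $\Delta$ comparably. That is false in general -- nothing in $Q(a)=0$, $a\equiv m\xi\ (\modu\ \eta)$ and nondegeneracy constrains $v_p(Q(t_r))$; e.g.\ for $Q=x_1^2+x_2^2-x_3^2$ (so $\Delta=-1$) one can have $Q(t)$ divisible by arbitrarily high powers of $p$. What the paper (following p.~572 of Dietmann) actually does is an \emph{active choice}: since $Q(a)=0$ and $Q(a,t_r)\neq 0$, the quantity $Q(\alpha a+t_r)=Q(t_r)+2\alpha Q(a,t_r)$ is a nonconstant linear function of $\alpha\in\ZZ$, and one chooses $\alpha$ (prime by prime) so that $p^{\lambda(p)+\pi(p)+1}\nmid Q(z)$ for $z=\alpha a+t_r$, which yields $l(p)\le\lambda(p)+\pi(p)$ and hence $\prod_{p\mid\eta}p^{l(p)}\le 2|\det Q|$. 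This replacement has a knock-on cost your bookkeeping ignores: $x^{(1)}$ is now built from $z$, whose norm may be as large as $|\alpha|\,\|a\|$, so the estimate $\|x^{(1)}\|\ll H\|q_r\|^2\|a\|$ is no longer available. The paper instead bounds $\|x^{(1)}\|$ via Dietmann's Lemma~20 using the cross terms $a_iz_j-a_jz_i$, i.e.\ via $\mathrm{dist}(z,L)=\mathrm{dist}(t_r,L)=\|q_r\|$ (invariant under shifts along $a$), giving $\|x^{(1)}\|\ll H$ when $r=1$ and $\|x^{(1)}\|\ll H\|q_2\|^2\|a\|\ll H^2\|a\|^{-1}$ when $r=2$; the latter needs the extra lower bound $\|q_1\|\gg H^{-1/2}$, proved through the claim $Q(q_1)\neq 0$ (a two-dimensional totally isotropic subspace cannot exist in a nondegenerate ternary quadratic space). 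None of these steps is "routine once the local bound is in hand''; they are the substance of the proof.

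A second, quantitative gap: you bound the minimal solution $a$ by Theorem~\ref{thm d} with $n=3$, i.e.\ $\|a\|\ll\max\{\eta^{3}|\Delta|^{2}H^{2},\eta^{3}H^{3}\}$, whereas the paper invokes Proposition~3 of Dietmann directly, giving $\|a\|\ll\eta^{2}|\det Q|^{2}H^{2}$. With your weaker bound the final chain produces at best $\eta^{5}$ (and possibly $H^{5}$) in place of $\eta^{4}H^{4}$, so the stated exponents of Proposition~\ref{prop 3} cannot be recovered even after the local issue above is repaired.
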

\begin{proof}
	Suppose $0$ is a solution of (\ref{eq}). Then by following the same proof as in Proposition \ref{prop 1} we get that there exists two linearly independent integral vectors $a$, $b$ satisfying (\ref{eq}) and 
	\begin{equation}\label{eq47}
	\|a\| \|b\|\ll \eta^{2} H^{2}.
	\end{equation}
	Now assume that $0$ is not a solution of (\ref{eq}). Without loss of generality assume that $\xi$ is primitive. Let $a$ be an integral vector satisfying (\ref{eq}) such that $\|a\|$ is least. Then 
	\begin{equation}\label{eq 2}
	\|a\|\ll \eta^{2} | \det Q |^{2} H^{2} \;(\text{ by Proposition 3 of \cite{Diet}}).
	\end{equation}
	Let $E$ be the plane through the origin perpendicular to $a$. Denote by $\Gamma$ the projection of $\ZZ^{3}$ onto $E$. Since $a$ is primitive, $\Gamma$ is a lattice in $E$ of determinant $\|a\|^{-1}$. Denote by $q_{1}, q_{2}$ the successive minima of the lattice $\Gamma$. 
	 By Minkowski's second theorem, we have
	\begin{equation}\label{min}
	\|q_{1}\|\|q_{2}\|\ll \|a\|^{-1}.
	\end{equation}
	By definition of $\Gamma$, there exists $t_{1},t_{2} \in \ZZ^{3}$ whose projections onto $E$ are $q_{1},q_{2}$ respectively. Then $\{a,t_{1},t_{2}\}$ are linearly independent.
	Denote by $S:=\{x:Q(a,x)=0\}$. Since $Q$ is nondegenerate both $t_{1},t_{2}$ cannot lie in $S$.\\
	\textbf{Case 1} ($t_{1}\notin S$): Then $Q(a,t_{1})\neq 0$.\\
	For a prime $p$, let $\lambda(p)$ be largest positive integer such that $p^{\lambda(p)}\mid \det Q$. Now choose $\alpha \in \ZZ$ such that $p^{\lambda(p)+\pi(p)+1}\nmid Q(z)$ for all primes $p\mid \eta$ where $z=\alpha a+t_{1}$, $\pi(p)=1 \text{ when } p=2 \text{ and }\pi(p)=0$ otherwise (see page 572 of \cite{Diet}).
	Since $Q(a,t_{1})\neq 0$, we have $Q(a,z)\neq 0$. Let
	$$x^{(1)}=Q(z)a-2Q(z,a)z.$$
	Then $Q(x^{(1)})=0$ and $a,x^{(1)}$ are linearly independent. But $x^{(1)}$ may not satisfy the congruence condition in (\ref{eq}). Denote by $L$ the line joining $0$ and $a$. Then
	\begin{align*}
	\text{dist}(z,L)&=\text{dist}(\alpha a+t_{1}, L)\\
	&=\text{dist}(t_{1},L)=\|q_{1}\|\\
	&\le \|a\|^{-1/2}\ \ ( \text{by } \ref{min})
	\end{align*}
	which implies
	\begin{equation*}
	\sum_{i,j=1}^{3}(a_{i}z_{j}-a_{j}z_{i})^{2} \le \|a\|.
	\end{equation*}
	Now, by Lemma 20 of \cite{Diet} we get
	\begin{equation}\label{eq 1}
	\|x^{(1)}\|\ll H.
	\end{equation}
	For a prime $p\mid \eta$, define $l(p)$ to be the largest integer such that $p^{l(p)} \mid Q(z)$.  Since $p^{\lambda(p)+\pi(p)+1}\nmid Q(z)$ for all $p\mid \eta$, we get
	$l(p)\le \lambda(p)+\pi(p)$. Hence
	\begin{equation}\label{eq 3}
	\prod_{p\mid \eta} p^{ l(p)}\le 2 | \det Q |.
	\end{equation}
	Let 
	$$M= \eta \prod_{p\mid \eta} p^{2 l(p)} \text{ and }N=\prod_{p\mid \eta} p^{2 l(p)}.$$
	By Lemma \ref{lem 1}, there exists $z'\in \ZZ^{3}$ such that $z'\equiv z(\modu M)$, $\{a, x^{(1)}, z'\}$ are linearly independent and $\|z'\|\ll M $.
	Now, define
	$$x^{(2)}=Q(z')x^{(1)}-2 Q(z',x^{(1)})z'.$$
	Then $Q(x^{(2)})=0$ and 
	$$x^{(2)}\equiv Q(z)^{2}a \  (\modu M)$$
	which implies
	$$ N^{-1}x^{(2)}\equiv m\xi \ (\modu \eta)$$
	for some $m\in \ZZ$ with $(m,\eta)=1$. 
	Let $b=N^{-1}x^{(2)}$. Then $b$ satisfies (\ref{eq}) and $a, b$ are linearly independent.
	Now, using (\ref{eq 2}), (\ref{eq 1}) and (\ref{eq 3}) it can be checked that
	\begin{equation}\label{eq46}
	 \|a\| \|b\| \ll H^{4} \eta^{4} | \det Q|^{4}. 
	\end{equation}
	\textbf{Case 2}($t_{1}\in S$): Then $t_{2}\notin S$. Hence $Q(a,t_{1})=0$ and $Q(a,t_{2})\neq 0$. \\
	As in Case 1, choose $\alpha \in \ZZ$ such that $p^{\lambda(p)+\pi(p)+1}\nmid Q(z)$ for all primes $p\mid \eta$ where $z=\alpha a+t_{2}$, $\pi(p)=1 \text{ when } p=2 \text{ and }\pi(p)=0$ otherwise. 
	Let
	$$x^{(1)}=Q(z)a-2Q(z,a)z.$$
	Then $Q(x^{(1)})=0$ and $a,x^{(1)}$ are linearly independent since $Q(a,z)\neq 0$. Now,
	\begin{align*}
	\text{dist}(z,L)&=\text{dist}(\alpha a+t_{2}, L)\\
	&=\text{dist}(t_{2},L)=\|q_{2}\|      
	\end{align*}
	which gives
	\begin{equation*}
	\sum_{i,j=1}^{3}(a_{i}z_{j}-a_{j}z_{i})^{2} \le \|a\|^{2}\|q_{2}\|  ^{2}
	\end{equation*}
	and by Lemma 20 of \cite{Diet} we have that
	\begin{equation}\label{eq 9}
	\begin{split}
	\|x^{(1)}\|& \ll H \|q_{2}\| ^{2} \|a\|\\
	& \ll H \|q_{1}\|   ^{-2} \|a\|^{-1} \;(\text{ by } \ref{min}).
	\end{split}
	\end{equation}
	In order to bound $x^{(1)}$, we need to find a lower bound for $q_{1}$.\\\\
	Claim: $Q(q_{1})\neq 0$.\\
	If not, since $Q(a,q_{1})=0$ and $Q(a)=0$, we get that $Q$ vanishes on the space spanned by $a, q_{1}$ which is $2$ dimensional. But the dimension of any totally isotropic subspace in a $3$ dimensional quadratic space has to be less than or equal to $1$ which is a contradiction. Hence $Q(q_{1})\neq 0$\\ \\
	Since $Q(a,q_{1})=0$, we get that $Q(t_{1})=Q(q_{1})\neq 0$. Since $Q(t_{1})\in \ZZ$, we have $|Q(q_{1})|\geq 1$. This implies $H \|q_{1}\|^{2}\gg 1$ which further gives $\|q_{1}\|\gg H^{-1/2}$. Hence 
	\begin{equation*}
	\|x^{(1)}\| \ll H^{2} \|y\|^{-1} \;(\text{ by } \ref{eq 9}).
	\end{equation*}
	By proceeding as in Case 1, we find an integral vector $b$ satisfying (\ref{eq}), such that $\{a, b\}$ are linearly independent and
	\begin{equation}\label{eq45}
	\|a\|\|b\|\ll H^{3} \eta^{2} | \det Q|^{2}.
	\end{equation}
	Now by (\ref{eq47}), (\ref{eq46}) and (\ref{eq45}) proposition follows.

\end{proof} 

\section{Proof of Theorem \ref{thm}}

\begin{proof}
	By Theorem \ref{thm d}, there exists $x\in \ZZ^{n}$ satisfying (\ref{eq36}) and (\ref{eq37}).\\
	\textbf{Case 1}: Suppose $x=0$.\\
	This implies $\eta \mid \xi$. By Case 1 of Theorem of \cite{Diet 1}, there exists $b\in \ZZ^{n}$ such that $Q(b)=0$, $L_{i}(b)\neq 0$ and 
	$$\|b\| \ll H^{\frac{n+1}{2}}.$$
	Let $a=\eta b$. Then $a$ satisfies (\ref{eq20}), $L_{i}(a)\neq 0 \text{ for } 1\le i\le k$ and 
	\begin{equation}\label{eq42}
	\|a\| \ll \eta H^{\frac{n+1}{2}}.
	\end{equation}
	From (\ref{eq42}) it can be easily seen that $a$ satisfies (\ref{eq43}).\\
	\textbf{Case 2}: Suppose $x\neq 0$.\\
	By Lemma \ref{lem 2}, there exists $t\in \ZZ^{n}$ such that $Q(t)\neq 0$ and $\|t\|\ll 1$. Let
	\begin{equation*}
	z=Q(t)x-2Q(x,t)t.
	\end{equation*}
	Then $Q(z)=0$ and by Lemma 19 of \cite{Diet} we get that $z\neq 0$.\\
	For a prime $p\mid \eta$, define $l(p)$ to be the largest positive integer such that $p^{l(p)} \mid Q(t)$. Then 
	\begin{equation} \label{eq40}
	\prod_{p\mid \eta} p^{l(p)} \le |Q(t)|.
	\end{equation}
	Let 
	\begin{equation}\label{eq41}
	M= \eta \prod_{p\mid \eta} p^{2 l(p)} \;\text{ and }\; N=\prod_{p\mid \eta} p^{2 l(p)}. 
	\end{equation}
	For $y\in \RR^{n}$, define 
	$$u_{y}:=Q(y)z-2Q(y,z)y.$$
	Claim: There exists $y\in \RR^{n}$ such that $L_{i}(u_{y})\neq 0$.\\
	If not, then $L_{i}(u_{y})=0$ for all $y\in \RR^{n}$. If $L_{i}(z)\neq 0$, we get
	$$ Q(y)=\frac{2Q(y,z)L_{i}(y)}{L_{i}(z)} \text{ for all } y\in \RR^{n}.$$
	Thus $Q$ is a product of two linear forms, which is a contradiction. \\
	If $L_{i}(z)=0$, then $Q(y,z)L_{i}(y)=0$ for all $y\in \RR^{n}$ which is not possible since $z\neq 0$ and $L_{i}$ is a non zero linear form.\\
	Now, define $f(y):=L_{1}(u_{y})\ldots L_{k}(u_{y})$. Then by the above Claim, it follows that $f(y)$ is a non zero polynomial. By Lemma \ref{lem 2}, there exists $q\in \ZZ^{n}$ such that $f(q)\neq 0$, $q\equiv t (\modu M)$ and $\|q\| \ll M$. Hence
	\begin{equation*}
	\begin{split}
	u_{q}&=Q(q)z-2Q(q,z)q\\
	&\equiv Q(t)^{2}x \;(\modu M).
	\end{split}
	\end{equation*}
	It follows that $N^{-1}u_{q}\equiv m\xi (\modu \eta)$ for some $m\in \ZZ$ with $(m,\eta)=1$. Let $m'\in \ZZ$ be such that $m m' \equiv 1 (\modu \eta)$ and $|m'|\leq \eta$. Let $a=m' N^{-1}u_{q}$. Then $a$ satisfies (\ref{eq20}) and since $f(q)\neq 0$ we have $L_{i}(a)\neq 0$ for $1\le i\le k$. Finally,
	\begin{equation*}
	\begin{split}
	\|a\|&=\|m'N^{-1} u_{q}\| \le \eta N^{-1}\|u_{q}\|\\
	&\ll \eta N^{-1} H \|q\|^{2}\|z\|\\
	& \ll \eta M^{2}N^{-1}H^{2}\|t\|^{2}\|x\| \\
	& \ll \eta^{3} H^{4} \|x\| \; \;(\text{by } (\ref{eq40}), (\ref{eq41})).
	\end{split}
	\end{equation*}
	Now, by using (\ref{eq37}) to bound $\|x\|$ it can be seen that $a$ satisfies (\ref{eq43}).
	
\end{proof}


	\end{document}